\newtheorem{theorem}{Theorem}
\theoremstyle{plain}
\newtheorem{definition}{Definition}
\newtheorem{lemma}{Lemma}
\newtheorem{remark}{Remark}
\numberwithin{equation}{section}
\begin{document}
\title[Co-ordinated convex functions ]{New some Hadamard's type inequalities
for co-ordinated convex functions }
\author{Mehmet Zeki Sar\i kaya$^{\star \clubsuit }$}
\address{$^{\clubsuit }$Department of Mathematics,Faculty of Science and
Arts, D\"{u}zce University, D\"{u}zce, Turkey}
\email{sarikayamz@gmail.com}
\thanks{$^{\star }$corresponding author}
\author{Erhan. SET$^{\blacksquare }$}
\address{$^{\blacksquare }$Atat\"{u}rk University, K.K. Education Faculty,
Department of Mathematics, 25240, Campus, Erzurum, Turkey}
\email{erhanset@yahoo.com}
\author{M. Emin Ozdemir$^{\blacklozenge }$}
\address{$^{\blacklozenge }$Graduate School of Natural and Applied Sciences,
A\u{g}r\i\ \.{I}brahim \c{C}e\c{c}en University, A\u{g}r\i , Turkey}
\email{emos@atauni.edu.tr}
\author{Sever S. Dragomir$^{\blacktriangledown }$}
\address{$^{\blacktriangledown }$Research Group in Mathematical Inequalities
\& Applications\\
School of Engineering \& Science\\
Victoria University, PO Box 14428\\
Melbourne City, MC 8001, Australia.}
\email{sever.dragomir@vu.edu.au}
\urladdr{http://rgmia.vu.edu.au/dragomir}
\date{}
\subjclass[2000]{ 26A51, 26D15.}
\keywords{ convex function, co-ordinated convex mapping, Hermite-Hadamard
inequality}

\begin{abstract}
In this paper, we establish new some Hermite-Hadamard's type inequalities of
convex functions of $2-$variables on the co-ordinates.
\end{abstract}

\maketitle

\section{Introduction}

Let $f:I\subseteq \mathbb{R\rightarrow R}$ be a convex mapping defined on
the interval $I$ of real numbers and $a,b\in I$, with $a<b.$ the following
double inequality is well known in the literature as the Hermite-Hadamard
inequality:%
\begin{equation*}
f\left( \frac{a+b}{2}\right) \leq \frac{1}{b-a}\int_{a}^{b}f\left( x\right)
dx\leq \frac{f\left( a\right) +f\left( b\right) }{2}.
\end{equation*}

Let us now consider a bidemensional interval $\Delta =:\left[ a,b\right]
\times \left[ c,d\right] $ in $\mathbb{R}^{2}$ with $a<b$ and $c<d$. A
mapping $f:\Delta \rightarrow \mathbb{R}$ is said to be convex on $\Delta $
if the following inequality:%
\begin{equation*}
f(tx+\left( 1-t\right) z,ty+\left( 1-t\right) w)\leq tf\left( x,y\right)
+\left( 1-t\right) f\left( z,w\right)
\end{equation*}%
holds, for all $\left( x,y\right) ,\left( z,w\right) \in \Delta $ and $t\in %
\left[ 0,1\right] .$A function $f:\Delta \rightarrow \mathbb{R}$ is said to
be on the co-ordinates on $\Delta $ if the partial mappings $f_{y}:\left[ a,b%
\right] \rightarrow \mathbb{R},$ \ $f_{y}\left( u\right) =f\left( u,y\right) 
$ and $f_{x}:\left[ c,d\right] \rightarrow \mathbb{R},$ \ $f_{x}\left(
v\right) =f\left( x,v\right) $ are convex where defined for all $x\in \left[
a,b\right] $ and $y\in \left[ c,d\right] \ $(see \cite{D}).

A formal definition for co-ordinated convex function may be stated as
follows:

\begin{definition}
A function $f:\Delta \rightarrow \mathbb{R}$ will be called co-ordinated
canvex on $\Delta $, for all $t,s\in \lbrack 0,1]$ and $(x,y),(u,v)\in
\Delta ,$if the following inequality holds:%
\begin{eqnarray*}
&&f(tx+\left( 1-t\right) y,su+\left( 1-s\right) v) \\
&\leq &tsf(x,u)+s(1-t)f(y,u)+t(1-s)f(x,v)+(1-t)(1-s)f(y,v).
\end{eqnarray*}
\end{definition}

Clearly, every convex function is co-ordinated convex. Furthermore, there
exist co-ordinated convex function which is not convex, (see, \cite{D}). For
several recent results concerning Hermite-Hadamard's inequality for some
convex function on the co-ordinates on a rectangle from the plane $\mathbb{R}%
^{2},$ we refer the reader to (\cite{AD1}-\cite{MEO}).

Also, in \cite{D}, Dragomir establish the following similar inequality of
Hadamard's type for co-ordinated convex mapping on a rectangle from the
plane $\mathbb{R}^{2}.$

\begin{theorem}
\label{t.1.3} Suppose that $f:\Delta \rightarrow \mathbb{R}$ is co-ordinated
convex on $\Delta .\ $Then one has the inequalities:%
\begin{eqnarray}
&&f\left( \dfrac{a+b}{2},\dfrac{c+d}{2}\right)  \label{E8} \\
&\leq &\dfrac{1}{2}\left[ \dfrac{1}{b-a}\dint_{a}^{b}f\left( x,\dfrac{c+d}{2}%
\right) dx+\dfrac{1}{d-c}\dint_{c}^{d}f\left( \dfrac{a+b}{2},y\right) dy%
\right]  \notag \\
&\leq &\dfrac{1}{\left( b-a\right) \left( d-c\right) }\dint_{a}^{b}%
\dint_{c}^{d}f\left( x,y\right) dydx  \notag \\
&\leq &\dfrac{1}{4}\left[ \dfrac{1}{b-a}\dint_{a}^{b}f\left( x,c\right) dx+%
\dfrac{1}{b-a}\dint_{a}^{b}f\left( x,d\right) dx\right.  \notag \\
&&\left. +\dfrac{1}{d-c}\dint_{c}^{d}f\left( a,y\right) dy+\dfrac{1}{d-c}%
\dint_{c}^{d}f\left( b,y\right) dy\right]  \notag \\
&\leq &\dfrac{f\left( a,c\right) +f\left( a,d\right) +f\left( b,c\right)
+f\left( b,d\right) }{4}.  \notag
\end{eqnarray}%
The above inequalities are sharp.
\end{theorem}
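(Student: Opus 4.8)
The plan is to reduce everything to the one-dimensional Hermite–Hadamard inequality applied to the partial mappings $f_x$ and $f_y$, which are convex by hypothesis, and then stitch the pieces together with the triangle inequality / averaging.

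First I would prove the bottom half of the chain, i.e. the estimate
\begin{equation*}
\frac{1}{(b-a)(d-c)}\int_a^b\int_c^d f(x,y)\,dy\,dx \leq \frac{1}{4}\Bigl[\cdots\Bigr]\leq \frac{f(a,c)+f(a,d)+f(b,c)+f(b,d)}{4}.
\end{equation*}
For the inner integral, fix $x\in[a,b]$ and apply the right-hand Hermite–Hadamard inequality to the convex map $f_x:[c,d]\to\mathbb{R}$ to get $\frac{1}{d-c}\int_c^d f(x,y)\,dy \leq \frac{f(x,c)+f(x,d)}{2}$; integrating in $x$ and dividing by $b-a$ yields $\frac{1}{(b-a)(d-c)}\int_a^b\int_c^d f \leq \frac12\bigl[\frac{1}{b-a}\int_a^b f(x,c)\,dx + \frac{1}{b-a}\int_a^b f(x,d)\,dx\bigr]$. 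Symmetrically, fixing $y$ and using convexity of $f_y$ on $[a,b]$ gives the companion bound with the roles of the variables swapped; averaging the two bounds produces exactly the $\frac14[\cdots]$ term. For the last inequality in the chain, apply the right Hermite–Hadamard bound once more to each of the four single integrals $\frac{1}{b-a}\int_a^b f(x,c)\,dx\le\frac{f(a,c)+f(b,c)}{2}$, etc., and sum.

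Next I would prove the top half by the mirror-image argument using the left-hand Hermite–Hadamard inequality $f\bigl(\frac{\alpha+\beta}{2}\bigr)\le\frac{1}{\beta-\alpha}\int_\alpha^\beta g$. Applying it to $y\mapsto f\bigl(\frac{a+b}{2},y\bigr)$ on $[c,d]$ and to $x\mapsto f\bigl(x,\frac{c+d}{2}\bigr)$ on $[a,b]$ gives $f\bigl(\frac{a+b}{2},\frac{c+d}{2}\bigr)\le \frac{1}{d-c}\int_c^d f\bigl(\frac{a+b}{2},y\bigr)\,dy$ and the analogous bound with the other partial map; averaging yields the first inequality of the theorem. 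For the second inequality, I would apply the left Hermite–Hadamard bound to $f_x$ (fixed $x$) evaluated at the midpoint $\frac{c+d}{2}$, namely $f\bigl(x,\frac{c+d}{2}\bigr)\le\frac{1}{d-c}\int_c^d f(x,y)\,dy$, then integrate in $x$; do the symmetric thing with $f_y$; average the two to obtain $\frac12\bigl[\frac{1}{b-a}\int_a^b f(x,\frac{c+d}{2})\,dx + \frac{1}{d-c}\int_c^d f(\frac{a+b}{2},y)\,dy\bigr]\le \frac{1}{(b-a)(d-c)}\int_a^b\int_c^d f$.

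The routine part is clear; the only real subtlety — and the step I'd treat most carefully — is the sharpness claim. For that I would exhibit an affine (or otherwise extremal) function, e.g. $f\equiv 1$ or a linear function $f(x,y)=\alpha x+\beta y+\gamma$, for which every application of Hermite–Hadamard above holds with equality, forcing the entire chain to collapse to equalities; this shows no constant can be improved. I would double-check that the averaging steps do not introduce slack for such $f$ (they don't, since equality propagates through linear combinations), so the bound is attained. No convergence issues arise because co-ordinated convexity forces the partial maps, hence $f$ itself on $\Delta$, to be integrable.
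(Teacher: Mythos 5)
Your argument is correct and is essentially the standard one: the paper itself quotes this theorem from Dragomir \cite{D} without reproducing a proof, and your reduction to the one-dimensional Hermite--Hadamard inequality applied to the convex partial mappings $f_{x}$ and $f_{y}$ (left half of the inequality for the midpoint bounds, right half for the endpoint bounds), followed by integrating in the free variable and averaging the two symmetric estimates, together with $f\equiv 1$ (or any affine $f$) to force equality throughout the chain for sharpness, is exactly the argument of the cited source. The only step you pass over lightly is the measurability/integrability of $f$ on $\Delta$ needed to integrate the pointwise inequalities and to apply Fubini; separate convexity gives separate continuity on the interior and boundedness by the corner values, which suffices, and this point is equally glossed over in \cite{D}.
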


The main purpose of this paper is to establish new Hadamard-type
inequalities of convex functions of $2$-variables on the co-ordinates.

\section{Inequalities for co-ordinated convex functions}

\begin{lemma}
\label{z} Let $f:\Delta \subset \mathbb{R}^{2}\rightarrow \mathbb{R}^{2}$ be
a partial differentiable mapping on $\Delta :=\left[ a,b\right] \times \left[
c,d\right] $ in $\mathbb{R}^{2}$ with $a<b$ and $c<d$. If\ $\dfrac{\partial
^{2}f}{\partial t\partial s}\in L(\Delta )$, then the following equality
holds:%
\begin{eqnarray}
&&\dfrac{f\left( a,c\right) +f\left( a,d\right) +f\left( b,c\right) +f\left(
b,d\right) }{4}+\dfrac{1}{\left( b-a\right) \left( d-c\right) }%
\dint_{a}^{b}\dint_{c}^{d}f\left( x,y\right) dydx  \label{1} \\
&&-\dfrac{1}{2}\left[ \dfrac{1}{b-a}\dint_{a}^{b}\left[ f\left( x,c\right)
+f\left( x,d\right) \right] dx+\dfrac{1}{d-c}\dint_{c}^{d}\left[ f\left(
a,y\right) +f\left( b,y\right) \right] dy\right]  \notag \\
&=&\dfrac{\left( b-a\right) \left( d-c\right) }{4}\dint_{0}^{1}%
\dint_{0}^{1}(1-2t)(1-2s)\dfrac{\partial ^{2}f}{\partial t\partial s}\left(
ta+(1-t)b,sc+(1-s)d\right) dtds.  \notag
\end{eqnarray}
\end{lemma}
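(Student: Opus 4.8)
The plan is to verify \eqref{1} by starting from the double integral on its right-hand side and reducing it, through two successive integrations by parts, to the combination of corner values, single integrals and the double integral that forms the left-hand side. Throughout I read $\frac{\partial ^{2}f}{\partial t\partial s}\left( ta+(1-t)b,sc+(1-s)d\right) $ as the mixed partial $f_{xy}:=\frac{\partial ^{2}f}{\partial x\partial y}$ of $f$ evaluated at the point $\left( ta+(1-t)b,sc+(1-s)d\right) $; the hypothesis $\frac{\partial ^{2}f}{\partial t\partial s}\in L(\Delta )$ then guarantees that all the integrals below converge and that Fubini's theorem may be used to interchange the order of integration.

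First I would fix $t$, set $x_{0}=ta+(1-t)b$, and evaluate the inner integral in $s$. Since $\frac{\partial }{\partial s}\left[ f_{x}\left( x_{0},sc+(1-s)d\right) \right] =(c-d)\,f_{xy}\left( x_{0},sc+(1-s)d\right) $, integration by parts with $u=1-2s$ and $dv=f_{xy}\left( x_{0},\cdot \right) ds$, together with the boundary evaluations at $s=1$ (where $sc+(1-s)d=c$) and $s=0$ (where $sc+(1-s)d=d$), gives
\begin{equation*}
\int_{0}^{1}(1-2s)\,f_{xy}\left( x_{0},sc+(1-s)d\right) ds=\frac{1}{c-d}\left[ -f_{x}(x_{0},c)-f_{x}(x_{0},d)+\frac{2}{d-c}\int_{c}^{d}f_{x}(x_{0},y)\,dy\right] .
\end{equation*}

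Next I would multiply by $1-2t$, integrate over $t\in [0,1]$, and integrate by parts once more in $t$ in each of the three resulting terms, using $\frac{d}{dt}f\left( ta+(1-t)b,y_{0}\right) =(a-b)\,f_{x}\left( ta+(1-t)b,y_{0}\right) $ and evaluating the boundary terms at $t=1$ ($\mapsto a$) and $t=0$ ($\mapsto b$). This produces the four corner values $f(a,c),f(b,c),f(a,d),f(b,d)$, the four single integrals $\int_{a}^{b}f(x,c)\,dx$, $\int_{a}^{b}f(x,d)\,dx$, $\int_{c}^{d}f(a,y)\,dy$, $\int_{c}^{d}f(b,y)\,dy$, and the double integral $\int_{a}^{b}\int_{c}^{d}f(x,y)\,dy\,dx$, each carrying an explicit rational coefficient. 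To finish, I would rewrite the integrals that still live on $[0,1]$ as integrals over $[a,b]$ or $[c,d]$ via the orientation-reversing substitutions $x=ta+(1-t)b$, $y=sc+(1-s)d$ (so that, for instance, $\int_{0}^{1}g\left( ta+(1-t)b\right) dt=\frac{1}{b-a}\int_{a}^{b}g(x)\,dx$), apply Fubini to the double integral, factor out the common $\frac{1}{(a-b)(c-d)}=\frac{1}{(b-a)(d-c)}$, and multiply through by $\frac{(b-a)(d-c)}{4}$. Matching the result against \eqref{1} term by term then completes the proof.

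The only genuine difficulty is sign bookkeeping. In each integration by parts the derivative of $1-2t$ (respectively $1-2s$) is $-2$, so $-\int u^{\prime }v$ turns into $+2\int v$, while the boundary evaluations at $0$ and $1$ supply the corner and edge terms with signs $\mp 1$; moreover each chain-rule differentiation and each substitution contributes a factor $a-b$ or $c-d$, and one must check that the orientation-reversing substitutions cancel these against the $\frac{1}{a-b}$ and $\frac{1}{c-d}$ coming from the chain rule, so that the two sign flips combine into the positive factor $(b-a)(d-c)$ out front. I expect that to be the only place an error could slip in; apart from it the argument is a routine double integration by parts, and no convexity of $f$ is needed.
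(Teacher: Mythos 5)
Your proposal is correct and follows essentially the same route as the paper: two successive integrations by parts on the right-hand side of \eqref{1} followed by the substitutions $x=ta+(1-t)b$, $y=sc+(1-s)d$ and multiplication by $\frac{(b-a)(d-c)}{4}$; the only (immaterial) difference is that you integrate by parts in $s$ first and then in $t$, while the paper does $t$ first and then $s$. The one step you carry out explicitly, the inner integration by parts in $s$, is computed correctly, and the remaining bookkeeping you describe does produce the stated identity.
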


\begin{proof}
By integration by parts, we get%
\begin{eqnarray}
&&\dint_{0}^{1}\dint_{0}^{1}(1-2s)(1-2t)\dfrac{\partial ^{2}f}{\partial
t\partial s}\left( ta+(1-t)b,sc+(1-s)d\right) dtds  \label{2} \\
&=&\dint_{0}^{1}(1-2s)\left\{ (1-2t)\left. \dfrac{1}{a-b}\dfrac{\partial f}{%
\partial s}\left( ta+(1-t)b,sc+(1-s)d\right) \right\vert _{0}^{1}\right. 
\notag \\
&&\ +\left. \dfrac{2}{a-b}\dint_{0}^{1}\dfrac{\partial f}{\partial s}\left(
ta+(1-t)b,sc+(1-s)d\right) dt\right\} ds  \notag \\
&=&\dint_{0}^{1}(1-2s)\left\{ -\dfrac{1}{a-b}\dfrac{\partial f}{\partial s}%
\left( a,sc+(1-s)d\right) -\dfrac{1}{a-b}\dfrac{\partial f}{\partial s}%
\left( b,sc+(1-s)d\right) \right.  \notag \\
&&+\left. \dfrac{2}{a-b}\dint_{0}^{1}\dfrac{\partial f}{\partial s}\left(
ta+(1-t)b,sc+(1-s)d\right) dt\right\} ds  \notag
\end{eqnarray}%
\begin{eqnarray}
&=&\dfrac{1}{b-a}\left\{ \dint_{0}^{1}(1-2s)\left( \dfrac{\partial f}{%
\partial s}\left( a,sc+(1-s)d\right) +\dfrac{\partial f}{\partial s}\left(
b,sc+(1-s)d\right) \right) ds\right.  \notag \\
&&\left. -2\dint_{0}^{1}\dint_{0}^{1}(1-2s)\dfrac{\partial f}{\partial s}%
\left( ta+(1-t)b,sc+(1-s)d\right) dtds\right\} .  \notag
\end{eqnarray}%
Thus, again by integration by parts in the right hand side of (\ref{2}), it
follows that%
\begin{eqnarray}
&&\dint_{0}^{1}(1-2s)\left( \dfrac{\partial f}{\partial s}\left(
a,sc+(1-s)d\right) +\dfrac{\partial f}{\partial s}\left( b,sc+(1-s)d\right)
\right) ds  \label{3} \\
&&-2\dint_{0}^{1}\dint_{0}^{1}(1-2s)\dfrac{\partial f}{\partial s}\left(
ta+(1-t)b,sc+(1-s)d\right) dtds  \notag \\
&=&(1-2s)\left. \dfrac{\left( f\left( a,sc+(1-s)d\right) +f\left(
b,sc+(1-s)d\right) \right) }{c-d}\right\vert _{0}^{1}  \notag \\
&&+\dfrac{2}{c-d}\dint_{0}^{1}\left( f\left( a,sc+(1-s)d\right) +f\left(
b,sc+(1-s)d\right) \right) ds  \notag \\
&&-2\dint_{0}^{1}\left\{ (1-2s)\left. \dfrac{f\left(
ta+(1-t)b,sc+(1-s)d\right) }{c-d}\right\vert _{0}^{1}\right.  \notag \\
&&\ +\left. \dfrac{2}{c-d}\dint_{0}^{1}f\left( ta+(1-t)b,sc+(1-s)d\right)
ds\right\} dt  \notag
\end{eqnarray}%
\begin{eqnarray*}
&=&-\dfrac{f\left( a,c\right) +f\left( b,c\right) }{c-d}-\dfrac{f\left(
a,d\right) +f\left( b,d\right) }{c-d} \\
&&+\dfrac{2}{c-d}\dint_{0}^{1}\left( f\left( a,sc+(1-s)d\right) +f\left(
b,sc+(1-s)d\right) \right) ds \\
&&-2\dint_{0}^{1}\left\{ -\dfrac{f\left( ta+(1-t)b,c\right) }{c-d}-\dfrac{%
f\left( ta+(1-t)b,d\right) }{c-d}\right. \\
&&\ \left. +\dfrac{2}{c-d}\dint_{0}^{1}f\left( ta+(1-t)b,sc+(1-s)d\right)
ds\right\} dt \\
&=&\dfrac{f\left( a,c\right) +f\left( a,d\right) +f\left( b,c\right)
+f\left( b,d\right) }{\left( d-c\right) } \\
&&+\dfrac{4}{\left( d-c\right) }\dint_{0}^{1}\dint_{0}^{1}f\left(
ta+(1-t)b,sc+(1-s)d\right) dsdt \\
&&-\dfrac{2}{\left( d-c\right) }\left\{ \dint_{0}^{1}\left( f\left(
a,sc+(1-s)d\right) +f\left( b,sc+(1-s)d\right) \right) ds\right. \\
&&+\left. \dint_{0}^{1}\left( f\left( ta+(1-t)b,c\right) +f\left(
ta+(1-t)b,d\right) \right) dt\right\} .
\end{eqnarray*}%
Writing (\ref{3}) in (\ref{2}), using the change of the variable $%
x=ta+(1-t)b $ and $y=sc+(1-s)d$ for $t,s\in \lbrack 0,1]^{2}$, and
multiplying the both sides by $\frac{\left( b-a\right) \left( d-c\right) }{4}%
,$ we obtain (\ref{1}), which completes the proof.
\end{proof}

\begin{theorem}
\label{t.2.1} Let $f:\Delta \subset \mathbb{R}^{2}\rightarrow \mathbb{R}^{2}$
be a partial differentiable mapping on $\Delta :=\left[ a,b\right] \times %
\left[ c,d\right] $ in $\mathbb{R}^{2}$ with $a<b$ and $c<d$. If\ $%
\left\vert \dfrac{\partial ^{2}f}{\partial t\partial s}\right\vert $ is a
convex function on the co-ordinates on $\Delta ,$ then one has the
inequalities:%
\begin{eqnarray}
&&\left\vert \dfrac{f\left( a,c\right) +f\left( a,d\right) +f\left(
b,c\right) +f\left( b,d\right) }{4}\right.  \label{S0} \\
&&\left. +\dfrac{1}{\left( b-a\right) \left( d-c\right) }\dint_{a}^{b}%
\dint_{c}^{d}f\left( x,y\right) dydx-A\right\vert  \notag \\
&\leq &\dfrac{\left( b-a\right) \left( d-c\right) }{16}\left( \dfrac{%
\left\vert \frac{\partial ^{2}f}{\partial s\partial t}(a,c)\right\vert
+\left\vert \frac{\partial ^{2}f}{\partial s\partial t}(a,d)\right\vert
+\left\vert \frac{\partial ^{2}f}{\partial s\partial t}(b,c)\right\vert
+\left\vert \frac{\partial ^{2}f}{\partial s\partial t}(b,d)\right\vert }{4}%
\right)  \notag
\end{eqnarray}%
where%
\begin{equation*}
A=\dfrac{1}{2}\left[ \dfrac{1}{b-a}\dint_{a}^{b}\left[ f\left( x,c\right)
+f\left( x,d\right) \right] dx+\dfrac{1}{d-c}\dint_{c}^{d}\left[ f\left(
a,y\right) +f\left( b,y\right) \right] dy\right] .
\end{equation*}
\end{theorem}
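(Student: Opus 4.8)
The plan is to derive Theorem~\ref{t.2.1} directly from the identity (\ref{1}) established in Lemma~\ref{z}. Note first that the expression inside the absolute value in (\ref{S0}), with $A$ as defined there, is precisely the left-hand side of (\ref{1}). Hence, taking absolute values in (\ref{1}) and applying the triangle inequality for double integrals, it suffices to estimate
\[
\frac{(b-a)(d-c)}{4}\int_{0}^{1}\int_{0}^{1}\left\vert 1-2t\right\vert \left\vert 1-2s\right\vert \left\vert \frac{\partial ^{2}f}{\partial t\partial s}\left( ta+(1-t)b,\,sc+(1-s)d\right) \right\vert dt\,ds .
\]

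Next I would invoke the hypothesis that $\left\vert \frac{\partial ^{2}f}{\partial t\partial s}\right\vert $ is co-ordinated convex on $\Delta $. Applying the defining inequality of co-ordinated convexity with the four corner points $(a,c),(b,c),(a,d),(b,d)$ yields the pointwise bound
\[
\left\vert \frac{\partial ^{2}f}{\partial t\partial s}\left( ta+(1-t)b,\,sc+(1-s)d\right) \right\vert \le ts\left\vert \frac{\partial ^{2}f}{\partial s\partial t}(a,c)\right\vert +s(1-t)\left\vert \frac{\partial ^{2}f}{\partial s\partial t}(b,c)\right\vert +t(1-s)\left\vert \frac{\partial ^{2}f}{\partial s\partial t}(a,d)\right\vert +(1-t)(1-s)\left\vert \frac{\partial ^{2}f}{\partial s\partial t}(b,d)\right\vert .
\]
Substituting this into the integral above and using that each of the four resulting terms separates into a product of a $t$-integral and an $s$-integral, the estimate splits into four summands, each carrying a constant of the form $\left( \int_{0}^{1}\left\vert 1-2t\right\vert \varphi (t)\,dt\right) \left( \int_{0}^{1}\left\vert 1-2s\right\vert \psi (s)\,ds\right) $ with $\varphi ,\psi \in \{\,t\mapsto t,\ t\mapsto 1-t\,\}$.

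The only genuine computation is the elementary integral $\int_{0}^{1}\left\vert 1-2t\right\vert t\,dt=\tfrac{1}{4}$ and, symmetrically, $\int_{0}^{1}\left\vert 1-2t\right\vert (1-t)\,dt=\tfrac{1}{4}$; consequently every one of the four constants equals $\tfrac{1}{4}\cdot \tfrac{1}{4}=\tfrac{1}{16}$. Collecting the four summands then produces
\[
\frac{(b-a)(d-c)}{4}\cdot \frac{1}{16}\left( \left\vert \frac{\partial ^{2}f}{\partial s\partial t}(a,c)\right\vert +\left\vert \frac{\partial ^{2}f}{\partial s\partial t}(a,d)\right\vert +\left\vert \frac{\partial ^{2}f}{\partial s\partial t}(b,c)\right\vert +\left\vert \frac{\partial ^{2}f}{\partial s\partial t}(b,d)\right\vert \right) ,
\]
which is exactly the right-hand side of (\ref{S0}). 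I do not anticipate any real obstacle here: all the substantive work was already carried out in Lemma~\ref{z}, and what remains is one use of the triangle inequality, one application of co-ordinated convexity, and the evaluation of a separable double integral. The only point requiring care is the bookkeeping — pairing the corner values $(b,c)$ and $(a,d)$ with the correct coefficients $s(1-t)$ and $t(1-s)$ when expanding the convexity estimate — so that the final symmetric four-term average comes out as stated.
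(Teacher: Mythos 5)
Your proposal is correct and follows essentially the same route as the paper: start from the identity of Lemma~\ref{z}, take absolute values, bound $\left\vert \frac{\partial ^{2}f}{\partial t\partial s}\right\vert$ along the segment by co-ordinated convexity, and evaluate $\int_{0}^{1}\left\vert 1-2t\right\vert t\,dt=\int_{0}^{1}\left\vert 1-2t\right\vert (1-t)\,dt=\tfrac{1}{4}$ to obtain the constant $\tfrac{(b-a)(d-c)}{64}$ in front of the four-term sum. The only cosmetic difference is that you expand to all four corner values at once and use separability of the product integral, whereas the paper applies convexity one variable at a time (first in $t$, then in $s$); the resulting computation and constant are identical.
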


\begin{proof}
From Lemma \ref{z}, we have%
\begin{eqnarray*}
&&\left\vert \dfrac{f\left( a,c\right) +f\left( a,d\right) +f\left(
b,c\right) +f\left( b,d\right) }{4}\right. \\
&&\left. +\dfrac{1}{\left( b-a\right) \left( d-c\right) }\dint_{a}^{b}%
\dint_{c}^{d}f\left( x,y\right) dydx-A\right\vert \\
&\leq &\dfrac{\left( b-a\right) \left( d-c\right) }{4} \\
&&\times \dint_{0}^{1}\dint_{0}^{1}\left\vert (1-2t)(1-2s)\right\vert
\left\vert \dfrac{\partial ^{2}f}{\partial t\partial s}\left(
ta+(1-t)b,sc+(1-s)d\right) \right\vert dtds.
\end{eqnarray*}%
Since $f:\Delta \rightarrow \mathbb{R}$ is co-ordinated convex on $\Delta $,
then one has:%
\begin{eqnarray*}
&&\left\vert \dfrac{f\left( a,c\right) +f\left( a,d\right) +f\left(
b,c\right) +f\left( b,d\right) }{4}\right. \\
&&\left. +\dfrac{1}{\left( b-a\right) \left( d-c\right) }\dint_{a}^{b}%
\dint_{c}^{d}f\left( x,y\right) dydx-A\right\vert \\
&\leq &\dfrac{\left( b-a\right) \left( d-c\right) }{4} \\
&&\times \dint_{0}^{1}\left[ \dint_{0}^{1}\left\vert (1-2t)(1-2s)\right\vert
\left\{ t\left\vert \dfrac{\partial ^{2}f}{\partial t\partial s}\left(
a,sc+(1-s)d\right) \right\vert \right. \right. \\
&&\left. \left. +(1-t)\left\vert \dfrac{\partial ^{2}f}{\partial t\partial s}%
\left( b,sc+(1-s)d\right) \right\vert \right\} dt\right] ds.
\end{eqnarray*}

Firstly, by calculating the integral in above inequality, we have%
\begin{eqnarray*}
&&\dint_{0}^{1}\left\vert 1-2t\right\vert \left\{ t\left\vert \dfrac{%
\partial ^{2}f}{\partial t\partial s}\left( a,sc+(1-s)d\right) \right\vert
+(1-t)\left\vert \dfrac{\partial ^{2}f}{\partial t\partial s}\left(
b,sc+(1-s)d\right) \right\vert \right\} dt \\
&=&\dint_{0}^{\frac{1}{2}}(1-2t)\left\{ t\left\vert \dfrac{\partial ^{2}f}{%
\partial t\partial s}\left( a,sc+(1-s)d\right) \right\vert +(1-t)\left\vert 
\dfrac{\partial ^{2}f}{\partial t\partial s}\left( b,sc+(1-s)d\right)
\right\vert \right\} dt \\
&&+\dint_{\frac{1}{2}}^{1}(2t-1)\left\{ t\left\vert \dfrac{\partial ^{2}f}{%
\partial t\partial s}\left( a,sc+(1-s)d\right) \right\vert +(1-t)\left\vert 
\dfrac{\partial ^{2}f}{\partial t\partial s}\left( b,sc+(1-s)d\right)
\right\vert \right\} dt
\end{eqnarray*}%
\begin{equation*}
=\dfrac{1}{4}\left( \left\vert \dfrac{\partial ^{2}f}{\partial t\partial s}%
\left( a,sc+(1-s)d\right) \right\vert +\left\vert \dfrac{\partial ^{2}f}{%
\partial t\partial s}\left( b,sc+(1-s)d\right) \right\vert \right) .
\end{equation*}

Thus, we obtain%
\begin{eqnarray}
&&\left\vert \dfrac{f\left( a,c\right) +f\left( a,d\right) +f\left(
b,c\right) +f\left( b,d\right) }{4}\right.  \label{S1} \\
&&\left. +\dfrac{1}{\left( b-a\right) \left( d-c\right) }\dint_{a}^{b}%
\dint_{c}^{d}f\left( x,y\right) dydx-A\right\vert  \notag \\
&\leq &\dfrac{\left( b-a\right) \left( d-c\right) }{16}  \notag \\
&&\times \dint_{0}^{1}\left\vert 1-2s\right\vert \left\{ \left\vert \dfrac{%
\partial ^{2}f}{\partial t\partial s}\left( a,sc+(1-s)d\right) \right\vert
+\left\vert \dfrac{\partial ^{2}f}{\partial t\partial s}\left(
b,sc+(1-s)d\right) \right\vert \right\} ds.  \notag
\end{eqnarray}%
A similar way for other integral, since $f:\Delta \rightarrow \mathbb{R}$ is
co-ordinated convex on $\Delta ,$ we get%
\begin{eqnarray}
&&\dint_{0}^{1}\left\vert 1-2s\right\vert \left\{ \left\vert \dfrac{\partial
^{2}f}{\partial t\partial s}\left( a,sc+(1-s)d\right) \right\vert
+\left\vert \dfrac{\partial ^{2}f}{\partial t\partial s}\left(
b,sc+(1-s)d\right) \right\vert \right\} ds  \label{S2} \\
&=&\dint_{0}^{\frac{1}{2}}(1-2s)\left\{ s\left\vert \dfrac{\partial ^{2}f}{%
\partial t\partial s}\left( a,c\right) \right\vert +(1-s)\left\vert \dfrac{%
\partial ^{2}f}{\partial t\partial s}\left( a,d\right) \right\vert \right\}
ds  \notag \\
&&+\dint_{0}^{\frac{1}{2}}(1-2s)\left\{ s\left\vert \dfrac{\partial ^{2}f}{%
\partial t\partial s}\left( b,c\right) \right\vert +(1-s)\left\vert \dfrac{%
\partial ^{2}f}{\partial t\partial s}\left( b,d\right) \right\vert \right\}
ds  \notag \\
&=&\dint_{\frac{1}{2}}^{1}(2s-1)\left\{ s\left\vert \dfrac{\partial ^{2}f}{%
\partial t\partial s}\left( a,c\right) \right\vert +(1-s)\left\vert \dfrac{%
\partial ^{2}f}{\partial t\partial s}\left( a,d\right) \right\vert \right\}
ds  \notag \\
&&\ +\dint_{\frac{1}{2}}^{1}(2s-1)\left\{ s\left\vert \dfrac{\partial ^{2}f}{%
\partial t\partial s}\left( b,c\right) \right\vert +(1-s)\left\vert \dfrac{%
\partial ^{2}f}{\partial t\partial s}\left( b,d\right) \right\vert \right\}
ds  \notag \\
&=&\dfrac{\left\vert \dfrac{\partial ^{2}f}{\partial t\partial s}\left(
a,c\right) \right\vert +\left\vert \dfrac{\partial ^{2}f}{\partial t\partial
s}\left( a,d\right) \right\vert +\left\vert \dfrac{\partial ^{2}f}{\partial
t\partial s}\left( b,c\right) \right\vert +\left\vert \dfrac{\partial ^{2}f}{%
\partial t\partial s}\left( b,d\right) \right\vert }{4}.  \notag
\end{eqnarray}%
By the (\ref{S1}) and (\ref{S2}), we get the inequality (\ref{S0}).
\end{proof}

\begin{theorem}
\label{t.2.2} Let $f:\Delta \subset \mathbb{R}^{2}\rightarrow \mathbb{R}^{2}$
be a partial differentiable mapping on $\Delta :=\left[ a,b\right] \times %
\left[ c,d\right] $ in $\mathbb{R}^{2}$ with $a<b$ and $c<d$. If\ $\
\left\vert \dfrac{\partial ^{2}f}{\partial t\partial s}\right\vert ^{q},$ $%
q>1,$is a convex function on the co-ordinates on $\Delta ,$ then one has the
inequalities:%
\begin{eqnarray}
&&\left\vert \dfrac{f\left( a,c\right) +f\left( a,d\right) +f\left(
b,c\right) +f\left( b,d\right) }{4}\right. \\
&&\left. +\dfrac{1}{\left( b-a\right) \left( d-c\right) }\dint_{a}^{b}%
\dint_{c}^{d}f\left( x,y\right) dydx-A\right\vert  \notag \\
&\leq &\dfrac{\left( b-a\right) \left( d-c\right) }{4\left( p+1\right) ^{%
\frac{2}{p}}}  \notag \\
&&\times \left( \dfrac{\left\vert \frac{\partial ^{2}f}{\partial s\partial t}%
(a,c)\right\vert ^{q}+\left\vert \frac{\partial ^{2}f}{\partial s\partial t}%
(a,d)\right\vert ^{q}+\left\vert \frac{\partial ^{2}f}{\partial s\partial t}%
(b,c)\right\vert ^{q}+\left\vert \frac{\partial ^{2}f}{\partial s\partial t}%
(b,d)\right\vert ^{q}}{4}\right) ^{\frac{1}{q}}  \notag
\end{eqnarray}%
where%
\begin{equation*}
A=\dfrac{1}{2}\left[ \dfrac{1}{b-a}\dint_{a}^{b}\left[ f\left( x,c\right)
+f\left( x,d\right) \right] dx+\dfrac{1}{d-c}\dint_{c}^{d}\left[ f\left(
a,y\right) +f\left( b,y\right) \right] dy\right] .
\end{equation*}%
and $\frac{1}{p}+\frac{1}{q}=1.$
\end{theorem}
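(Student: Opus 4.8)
The plan is to mimic the proof of Theorem \ref{t.2.1}, but to replace the direct convexity estimate of the kernel integral by a H\"older splitting. First I would apply Lemma \ref{z}, take absolute values in \eqref{1}, and move the modulus inside the double integral to obtain
\begin{equation*}
\left\vert \dfrac{f(a,c)+f(a,d)+f(b,c)+f(b,d)}{4}+\dfrac{1}{(b-a)(d-c)}\dint_{a}^{b}\dint_{c}^{d}f(x,y)\,dydx-A\right\vert \leq \dfrac{(b-a)(d-c)}{4}\,I,
\end{equation*}
where
\begin{equation*}
I=\dint_{0}^{1}\dint_{0}^{1}\left\vert 1-2t\right\vert \left\vert 1-2s\right\vert \left\vert \dfrac{\partial ^{2}f}{\partial t\partial s}\left( ta+(1-t)b,sc+(1-s)d\right) \right\vert \,dtds.
\end{equation*}

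Next I would bound $I$ by H\"older's inequality with exponents $p$ and $q$ (either the two-variable version directly, or the one-variable version applied successively in $t$ and then in $s$), which gives
\begin{equation*}
I\leq \left( \dint_{0}^{1}\dint_{0}^{1}\left\vert 1-2t\right\vert ^{p}\left\vert 1-2s\right\vert ^{p}\,dtds\right) ^{\frac{1}{p}}\left( \dint_{0}^{1}\dint_{0}^{1}\left\vert \dfrac{\partial ^{2}f}{\partial t\partial s}\left( ta+(1-t)b,sc+(1-s)d\right) \right\vert ^{q}\,dtds\right) ^{\frac{1}{q}}.
\end{equation*}
The first factor is evaluated by the elementary identity $\dint_{0}^{1}\left\vert 1-2t\right\vert ^{p}\,dt=\dfrac{1}{p+1}$ (symmetry about $t=\tfrac12$) and its analogue in $s$, so it equals $(p+1)^{-2/p}$.

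For the second factor I would invoke the hypothesis that $\left\vert \partial ^{2}f/\partial t\partial s\right\vert ^{q}$ is co-ordinated convex on $\Delta $: applying the defining inequality of a co-ordinated convex function with the convex combinations $ta+(1-t)b$, $sc+(1-s)d$ bounds the integrand above by
\begin{equation*}
ts\left\vert \dfrac{\partial ^{2}f}{\partial t\partial s}(a,c)\right\vert ^{q}+s(1-t)\left\vert \dfrac{\partial ^{2}f}{\partial t\partial s}(b,c)\right\vert ^{q}+t(1-s)\left\vert \dfrac{\partial ^{2}f}{\partial t\partial s}(a,d)\right\vert ^{q}+(1-t)(1-s)\left\vert \dfrac{\partial ^{2}f}{\partial t\partial s}(b,d)\right\vert ^{q}.
\end{equation*}
Integrating this over $[0,1]^{2}$ and using $\dint_{0}^{1}\dint_{0}^{1}ts\,dtds=\dint_{0}^{1}\dint_{0}^{1}s(1-t)\,dtds=\dint_{0}^{1}\dint_{0}^{1}t(1-s)\,dtds=\dint_{0}^{1}\dint_{0}^{1}(1-t)(1-s)\,dtds=\tfrac14$ collapses the second factor to $\left[ \tfrac14\left( \left\vert \tfrac{\partial ^{2}f}{\partial s\partial t}(a,c)\right\vert ^{q}+\left\vert \tfrac{\partial ^{2}f}{\partial s\partial t}(a,d)\right\vert ^{q}+\left\vert \tfrac{\partial ^{2}f}{\partial s\partial t}(b,c)\right\vert ^{q}+\left\vert \tfrac{\partial ^{2}f}{\partial s\partial t}(b,d)\right\vert ^{q}\right) \right] ^{1/q}$. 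Multiplying the two factors by the prefactor $(b-a)(d-c)/4$ yields exactly the claimed inequality.

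The computation is routine; the only delicate point is keeping the conjugate exponents straight in the H\"older step so that the constant is $(p+1)^{-2/p}$ and not $(p+1)^{-1/p}$, and the hypothesis $q>1$ is precisely what makes $p$ finite, so that $\left\vert 1-2t\right\vert ^{p}$ is integrable and the relation $\tfrac1p+\tfrac1q=1$ is available.
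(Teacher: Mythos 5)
Your proposal is correct and follows essentially the same route as the paper: apply Lemma \ref{z}, use H\"older's inequality for double integrals with conjugate exponents $p$ and $q$ to split off the kernel $\left\vert (1-2t)(1-2s)\right\vert$, evaluate $\left( \dint_{0}^{1}\dint_{0}^{1}\left\vert (1-2t)(1-2s)\right\vert ^{p}dtds\right) ^{1/p}=(p+1)^{-2/p}$, and bound the remaining factor via the co-ordinated convexity of $\left\vert \partial ^{2}f/\partial t\partial s\right\vert ^{q}$, with each of the four weight integrals equal to $\tfrac14$. No gaps; the argument matches the paper's proof step for step.
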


\begin{proof}
From Lemma \ref{z}, we have%
\begin{eqnarray*}
&&\left\vert \dfrac{f\left( a,c\right) +f\left( a,d\right) +f\left(
b,c\right) +f\left( b,d\right) }{4}\right. \\
&&\left. +\dfrac{1}{\left( b-a\right) \left( d-c\right) }\dint_{a}^{b}%
\dint_{c}^{d}f\left( x,y\right) dydx-A\right\vert \\
&\leq &\dfrac{\left( b-a\right) \left( d-c\right) }{4} \\
&&\times \dint_{0}^{1}\dint_{0}^{1}\left\vert (1-2t)(1-2s)\right\vert
\left\vert \dfrac{\partial ^{2}f}{\partial t\partial s}\left(
ta+(1-t)b,sc+(1-s)d\right) \right\vert dtds.
\end{eqnarray*}%
By using the well known H\"{o}lder inequality for double integrals, $%
f:\Delta \rightarrow \mathbb{R}$ is co-ordinated convex on $\Delta $, then
one has:%
\begin{eqnarray*}
&&\left\vert \dfrac{f\left( a,c\right) +f\left( a,d\right) +f\left(
b,c\right) +f\left( b,d\right) }{4}\right. \\
&&\left. +\dfrac{1}{\left( b-a\right) \left( d-c\right) }\dint_{a}^{b}%
\dint_{c}^{d}f\left( x,y\right) dydx-A\right\vert \\
&\leq &\dfrac{\left( b-a\right) \left( d-c\right) }{4}\left(
\dint_{0}^{1}\dint_{0}^{1}\left\vert (1-2t)(1-2s)\right\vert ^{p}dtds\right)
^{\frac{1}{p}} \\
&&\ \times \left( \dint_{0}^{1}\dint_{0}^{1}\left\vert \dfrac{\partial ^{2}f%
}{\partial t\partial s}\left( ta+(1-t)b,sc+(1-s)d\right) \right\vert
^{q}dtds\right) ^{\frac{1}{q}}.
\end{eqnarray*}%
Since $\left\vert \dfrac{\partial ^{2}f}{\partial t\partial s}\right\vert
^{q}$ is convex function on the co-ordinates on $\Delta ,$ we know that for $%
t\in \lbrack 0,1]$%
\begin{eqnarray*}
&&\left\vert \dfrac{\partial ^{2}f}{\partial t\partial s}%
(ta+(1-t)b,sc+(1-s)d)\right\vert ^{q} \\
&\leq &t\left\vert \dfrac{\partial ^{2}f}{\partial t\partial s}%
(a,sc+(1-s)d)\right\vert ^{q}+(1-t)\left\vert \dfrac{\partial ^{2}f}{%
\partial t\partial s}(b,sc+(1-s)d)\right\vert ^{q}
\end{eqnarray*}%
and%
\begin{eqnarray*}
&&\left\vert \dfrac{\partial ^{2}f}{\partial t\partial s}%
(ta+(1-t)b,sc+(1-s)d)\right\vert ^{q} \\
&\leq &ts\left\vert \dfrac{\partial ^{2}f}{\partial t\partial s}%
(a,c)\right\vert ^{q}+t(1-s)\left\vert \dfrac{\partial ^{2}f}{\partial
t\partial s}(a,d)\right\vert ^{q} \\
&&+(1-t)s\left\vert \dfrac{\partial ^{2}f}{\partial t\partial s}%
(b,c)\right\vert ^{q}+(1-t)(1-s)\left\vert \dfrac{\partial ^{2}f}{\partial
t\partial s}(b,c)\right\vert ^{q}
\end{eqnarray*}%
hence, it follows that%
\begin{eqnarray*}
&&\left\vert \dfrac{f\left( a,c\right) +f\left( a,d\right) +f\left(
b,c\right) +f\left( b,d\right) }{4}\right. \\
&&\left. +\dfrac{1}{\left( b-a\right) \left( d-c\right) }\dint_{a}^{b}%
\dint_{c}^{d}f\left( x,y\right) dydx-A\right\vert \\
\ &\leq &\dfrac{\left( b-a\right) \left( d-c\right) }{4\left( p+1\right) ^{%
\frac{2}{p}}} \\
&&\times \left( \dint_{0}^{1}\dint_{0}^{1}\left\{ ts\left\vert \dfrac{%
\partial ^{2}f}{\partial t\partial s}(a,c)\right\vert ^{q}+t(1-s)\left\vert 
\dfrac{\partial ^{2}f}{\partial t\partial s}(a,d)\right\vert ^{q}\right.
\right. \\
&&\ \left. \left. +(1-t)s\left\vert \dfrac{\partial ^{2}f}{\partial
t\partial s}(b,c)\right\vert ^{q}+(1-t)(1-s)\left\vert \dfrac{\partial ^{2}f%
}{\partial t\partial s}(b,d)\right\vert ^{q}\right\} dtds\right) ^{\frac{1}{q%
}} \\
&=&\dfrac{\left( b-a\right) \left( d-c\right) }{4\left( p+1\right) ^{\frac{2%
}{p}}} \\
&&\times \left( \dfrac{\left\vert \frac{\partial ^{2}f}{\partial s\partial t}%
(a,c)\right\vert ^{q}+\left\vert \frac{\partial ^{2}f}{\partial s\partial t}%
(a,d)\right\vert ^{q}+\left\vert \frac{\partial ^{2}f}{\partial s\partial t}%
(b,c)\right\vert ^{q}+\left\vert \frac{\partial ^{2}f}{\partial s\partial t}%
(b,d)\right\vert ^{q}}{4}\right) ^{\frac{1}{q}}.
\end{eqnarray*}
\end{proof}

\begin{theorem}
\label{t.2.3} Let $f:\Delta \subset \mathbb{R}^{2}\rightarrow \mathbb{R}^{2}$
be a partial differentiable mapping on $\Delta :=\left[ a,b\right] \times %
\left[ c,d\right] $ in $\mathbb{R}^{2}$ with $a<b$ and $c<d$. If\ $\
\left\vert \dfrac{\partial ^{2}f}{\partial t\partial s}\right\vert ^{q},$ $%
q\geq 1,$is a convex function on the co-ordinates on $\Delta ,$ then one has
the inequalities:%
\begin{eqnarray}
&&\left\vert \dfrac{f\left( a,c\right) +f\left( a,d\right) +f\left(
b,c\right) +f\left( b,d\right) }{4}\right.  \label{E1} \\
&&\left. +\dfrac{1}{\left( b-a\right) \left( d-c\right) }\dint_{a}^{b}%
\dint_{c}^{d}f\left( x,y\right) dydx-A\right\vert  \notag \\
&\leq &\dfrac{\left( b-a\right) \left( d-c\right) }{16}  \notag \\
&&\times \left( \dfrac{\left\vert \dfrac{\partial ^{2}f}{\partial t\partial s%
}(a,c)\right\vert ^{q}+\left\vert \dfrac{\partial ^{2}f}{\partial t\partial s%
}(a,d)\right\vert ^{q}+\left\vert \dfrac{\partial ^{2}f}{\partial t\partial s%
}(b,c)\right\vert ^{q}+\left\vert \dfrac{\partial ^{2}f}{\partial t\partial s%
}(b,d)\right\vert ^{q}}{4}\right) ^{\frac{1}{q}}  \notag
\end{eqnarray}%
where%
\begin{equation*}
A=\dfrac{1}{2}\left[ \dfrac{1}{b-a}\dint_{a}^{b}\left[ f\left( x,c\right)
+f\left( x,d\right) \right] dx+\dfrac{1}{d-c}\dint_{c}^{d}\left[ f\left(
a,y\right) +f\left( b,y\right) \right] dy\right] .
\end{equation*}
\end{theorem}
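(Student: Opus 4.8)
The plan is to proceed exactly as in the proofs of Theorem \ref{t.2.1} and Theorem \ref{t.2.2}, but to replace H\"{o}lder's inequality by the power-mean (weighted Jensen) inequality, which is the natural tool once one only assumes $q \geq 1$ instead of $q>1$. First I would quote the integral identity of Lemma \ref{z} and pass to absolute values, obtaining
\begin{equation*}
\left\vert \frac{f(a,c)+f(a,d)+f(b,c)+f(b,d)}{4} + \frac{1}{(b-a)(d-c)}\int_a^b\int_c^d f(x,y)\,dy\,dx - A \right\vert \leq \frac{(b-a)(d-c)}{4}\,I ,
\end{equation*}
where $I := \int_0^1\int_0^1 \left\vert 1-2t\right\vert \left\vert 1-2s\right\vert \left\vert \frac{\partial^2 f}{\partial t\partial s}(ta+(1-t)b,sc+(1-s)d)\right\vert \,dt\,ds$.

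Next I would apply the power-mean inequality to $I$ with respect to the weight $\left\vert 1-2t\right\vert \left\vert 1-2s\right\vert$, whose total mass is $\int_0^1\int_0^1 \left\vert 1-2t\right\vert \left\vert 1-2s\right\vert \,dt\,ds = \left( \int_0^1 \left\vert 1-2t\right\vert \,dt\right)^2 = \frac14$; this yields
\begin{equation*}
I \leq \left( \frac14\right)^{1-\frac1q} \left( \int_0^1\int_0^1 \left\vert 1-2t\right\vert \left\vert 1-2s\right\vert \left\vert \frac{\partial^2 f}{\partial t\partial s}(ta+(1-t)b,sc+(1-s)d)\right\vert^q \,dt\,ds\right)^{\frac1q}.
\end{equation*}
Then I would use the hypothesis that $\left\vert \frac{\partial^2 f}{\partial t\partial s}\right\vert^q$ is co-ordinated convex to bound the integrand above by $ts\,M_{ac}+t(1-s)\,M_{ad}+(1-t)s\,M_{bc}+(1-t)(1-s)\,M_{bd}$, where $M_{xy} := \left\vert \frac{\partial^2 f}{\partial t\partial s}(x,y)\right\vert^q$. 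Multiplying by the weight and integrating, the only elementary computation required is
\begin{equation*}
\int_0^1 \left\vert 1-2t\right\vert t\,dt = \int_0^1 \left\vert 1-2t\right\vert (1-t)\,dt = \frac14 ,
\end{equation*}
obtained by splitting the integral at $t=\frac12$; hence each of the four product weights contributes $\frac14\cdot\frac14=\frac1{16}$ and the inner $q$-th power integral equals $\frac1{16}\left( M_{ac}+M_{ad}+M_{bc}+M_{bd}\right)$.

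Finally I would collect the constants, $\frac{(b-a)(d-c)}{4}\left( \frac14\right)^{1-\frac1q}\left( \frac1{16}\right)^{\frac1q} = \frac{(b-a)(d-c)}{16}\left( \frac14\right)^{\frac1q}$, and combine the last factor with $\left( M_{ac}+M_{ad}+M_{bc}+M_{bd}\right)^{1/q}$ to arrive at precisely the right-hand side of (\ref{E1}). I expect the only real obstacle to be the bookkeeping with exponents---checking that the power-mean factor $\left( \frac14\right)^{1-1/q}$ and the convexity factor $\left( \frac1{16}\right)^{1/q}$ combine to the constant appearing in (\ref{E1}); a convenient consistency check is that for $q=1$ the estimate collapses exactly to the bound of Theorem \ref{t.2.1}.
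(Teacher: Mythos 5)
Your proposal is correct and follows essentially the same route as the paper: the identity of Lemma \ref{z}, the power-mean inequality with weight $\left\vert 1-2t\right\vert \left\vert 1-2s\right\vert$ of total mass $\tfrac14$, co-ordinated convexity of $\left\vert \partial^{2}f/\partial t\partial s\right\vert ^{q}$, and the evaluation $\int_{0}^{1}\left\vert 1-2t\right\vert t\,dt=\int_{0}^{1}\left\vert 1-2t\right\vert (1-t)\,dt=\tfrac14$ (which the paper carries out by splitting at $t=\tfrac12$ into the pieces $\tfrac1{24}$ and $\tfrac5{24}$). Your bookkeeping of the constants, $\left(\tfrac14\right)^{1-1/q}\left(\tfrac1{16}\right)^{1/q}\cdot\tfrac14=\tfrac1{16}\left(\tfrac14\right)^{1/q}$, reproduces the right-hand side of (\ref{E1}) exactly.
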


\begin{proof}
From Lemma \ref{z}, we have%
\begin{eqnarray*}
&&\left\vert \dfrac{f\left( a,c\right) +f\left( a,d\right) +f\left(
b,c\right) +f\left( b,d\right) }{4}\right. \\
&&\left. +\dfrac{1}{\left( b-a\right) \left( d-c\right) }\dint_{a}^{b}%
\dint_{c}^{d}f\left( x,y\right) dydx-A\right\vert \\
&\leq &\dfrac{\left( b-a\right) \left( d-c\right) }{4} \\
&&\times \dint_{0}^{1}\dint_{0}^{1}\left\vert (1-2t)(1-2s)\right\vert
\left\vert \dfrac{\partial ^{2}f}{\partial t\partial s}\left(
ta+(1-t)b,sc+(1-s)d\right) \right\vert dtds.
\end{eqnarray*}%
By using the well known power mean inequality for double integrals, $%
f:\Delta \rightarrow \mathbb{R}$ is co-ordinated convex on $\Delta $, then
one has:%
\begin{eqnarray*}
&&\left\vert \dfrac{f\left( a,c\right) +f\left( a,d\right) +f\left(
b,c\right) +f\left( b,d\right) }{4}\right. \\
&&\left. +\dfrac{1}{\left( b-a\right) \left( d-c\right) }\dint_{a}^{b}%
\dint_{c}^{d}f\left( x,y\right) dydx-A\right\vert \\
&\leq &\dfrac{\left( b-a\right) \left( d-c\right) }{4}\left(
\dint_{0}^{1}\dint_{0}^{1}\left\vert (1-2t)(1-2s)\right\vert dtds\right) ^{1-%
\frac{1}{q}} \\
&&\times \left( \dint_{0}^{1}\dint_{0}^{1}\left\vert (1-2t)(1-2s)\right\vert
\left\vert \dfrac{\partial ^{2}f}{\partial t\partial s}\left(
ta+(1-t)b,sc+(1-s)d\right) \right\vert ^{q}dtds\right) ^{\frac{1}{q}}.
\end{eqnarray*}%
Since $\left\vert \dfrac{\partial ^{2}f}{\partial t\partial s}\right\vert
^{q}$ is convex function on the co-ordinates on $\Delta ,$ we know that for $%
t\in \lbrack 0,1]$%
\begin{eqnarray*}
&&\left\vert \dfrac{\partial ^{2}f}{\partial t\partial s}%
(ta+(1-t)b,sc+(1-s)d)\right\vert ^{q} \\
&\leq &t\left\vert \dfrac{\partial ^{2}f}{\partial t\partial s}%
(a,sc+(1-s)d)\right\vert ^{q}+(1-t)\left\vert \dfrac{\partial ^{2}f}{%
\partial t\partial s}(b,sc+(1-s)d)\right\vert ^{q}
\end{eqnarray*}%
and%
\begin{eqnarray*}
&&\left\vert \dfrac{\partial ^{2}f}{\partial t\partial s}%
(ta+(1-t)b,sc+(1-s)d)\right\vert ^{q} \\
&\leq &ts\left\vert \dfrac{\partial ^{2}f}{\partial t\partial s}%
(a,c)\right\vert ^{q}+t(1-s)\left\vert \dfrac{\partial ^{2}f}{\partial
t\partial s}(a,d)\right\vert ^{q} \\
&&+(1-t)s\left\vert \dfrac{\partial ^{2}f}{\partial t\partial s}%
(b,c)\right\vert ^{q}+(1-t)(1-s)\left\vert \dfrac{\partial ^{2}f}{\partial
t\partial s}(b,c)\right\vert ^{q}
\end{eqnarray*}%
hence, it follows that%
\begin{eqnarray*}
&&\left\vert \dfrac{f\left( a,c\right) +f\left( a,d\right) +f\left(
b,c\right) +f\left( b,d\right) }{4}\right. \\
&&\left. +\dfrac{1}{\left( b-a\right) \left( d-c\right) }\dint_{a}^{b}%
\dint_{c}^{d}f\left( x,y\right) dydx-A\right\vert
\end{eqnarray*}%
\begin{eqnarray*}
&\leq &\dfrac{\left( b-a\right) \left( d-c\right) }{4}\left( \dfrac{1}{4}%
\right) ^{1-\frac{1}{q}} \\
&&\times \left( \dint_{0}^{1}\dint_{0}^{1}\left\vert (1-2t)(1-2s)\right\vert
\left\{ ts\left\vert \dfrac{\partial ^{2}f}{\partial t\partial s}%
(a,c)\right\vert ^{q}+t(1-s)\left\vert \dfrac{\partial ^{2}f}{\partial
t\partial s}(a,d)\right\vert ^{q}\right. \right. \\
&&\ \ \left. \left. +(1-t)s\left\vert \dfrac{\partial ^{2}f}{\partial
t\partial s}(b,c)\right\vert ^{q}+(1-t)(1-s)\left\vert \dfrac{\partial ^{2}f%
}{\partial t\partial s}(b,d)\right\vert ^{q}\right\} dtds\right) ^{\frac{1}{q%
}}.
\end{eqnarray*}%
Firstly, by calculating the integral in above inequality, we have%
\begin{eqnarray*}
&&\dint_{0}^{1}\left\vert 1-2t\right\vert \left( ts\left\vert \dfrac{%
\partial ^{2}f}{\partial t\partial s}(a,c)\right\vert ^{q}+t(1-s)\left\vert 
\dfrac{\partial ^{2}f}{\partial t\partial s}(a,d)\right\vert ^{q}\right. \\
&&\left. +(1-t)s\left\vert \dfrac{\partial ^{2}f}{\partial t\partial s}%
(b,c)\right\vert ^{q}+(1-t)(1-s)\left\vert \dfrac{\partial ^{2}f}{\partial
t\partial s}(b,d)\right\vert ^{q}\right) dt \\
&=&\dint_{0}^{\frac{1}{2}}\left( 1-2t\right) \left( ts\left\vert \dfrac{%
\partial ^{2}f}{\partial t\partial s}(a,c)\right\vert ^{q}+t(1-s)\left\vert 
\dfrac{\partial ^{2}f}{\partial t\partial s}(a,d)\right\vert ^{q}\right. \\
&&\left. +(1-t)s\left\vert \dfrac{\partial ^{2}f}{\partial t\partial s}%
(b,c)\right\vert ^{q}+(1-t)(1-s)\left\vert \dfrac{\partial ^{2}f}{\partial
t\partial s}(b,d)\right\vert ^{q}\right) dt \\
&&+\dint_{\frac{1}{2}}^{1}\left( 2t-1\right) \left( ts\left\vert \dfrac{%
\partial ^{2}f}{\partial t\partial s}(a,c)\right\vert ^{q}+t(1-s)\left\vert 
\dfrac{\partial ^{2}f}{\partial t\partial s}(a,d)\right\vert ^{q}\right. \\
&&\left. +(1-t)s\left\vert \dfrac{\partial ^{2}f}{\partial t\partial s}%
(b,c)\right\vert ^{q}+(1-t)(1-s)\left\vert \dfrac{\partial ^{2}f}{\partial
t\partial s}(b,d)\right\vert ^{q}\right) dt \\
&=&\dfrac{s\left\vert \dfrac{\partial ^{2}f}{\partial t\partial s}%
(a,c)\right\vert ^{q}}{24}+\dfrac{(1-s)\left\vert \dfrac{\partial ^{2}f}{%
\partial t\partial s}(a,d)\right\vert ^{q}}{24} \\
&&+\dfrac{5s\left\vert \dfrac{\partial ^{2}f}{\partial t\partial s}%
(b,c)\right\vert ^{q}}{24}+\dfrac{5(1-s)\left\vert \dfrac{\partial ^{2}f}{%
\partial t\partial s}(b,d)\right\vert ^{q}}{24} \\
&&+\dfrac{5s\left\vert \dfrac{\partial ^{2}f}{\partial t\partial s}%
(a,c)\right\vert ^{q}}{24}+\dfrac{5(1-s)\left\vert \dfrac{\partial ^{2}f}{%
\partial t\partial s}(a,d)\right\vert ^{q}}{24} \\
&&+\dfrac{s\left\vert \dfrac{\partial ^{2}f}{\partial t\partial s}%
(b,c)\right\vert ^{q}}{24}+\dfrac{(1-s)\left\vert \dfrac{\partial ^{2}f}{%
\partial t\partial s}(b,d)\right\vert ^{q}}{24} \\
&=&\dfrac{s\left\vert \dfrac{\partial ^{2}f}{\partial t\partial s}%
(a,c)\right\vert ^{q}+(1-s)\left\vert \dfrac{\partial ^{2}f}{\partial
t\partial s}(a,d)\right\vert ^{q}}{4} \\
&&+\frac{s\left\vert \dfrac{\partial ^{2}f}{\partial t\partial s}%
(b,c)\right\vert ^{q}+(1-s)\left\vert \dfrac{\partial ^{2}f}{\partial
t\partial s}(b,d)\right\vert ^{q}}{4}.
\end{eqnarray*}%
Thus, we obtain%
\begin{eqnarray}
&&\left\vert \dfrac{f\left( a,c\right) +f\left( a,d\right) +f\left(
b,c\right) +f\left( b,d\right) }{4}\right.  \label{E2} \\
&&\left. +\dfrac{1}{\left( b-a\right) \left( d-c\right) }\dint_{a}^{b}%
\dint_{c}^{d}f\left( x,y\right) dydx-A\right\vert  \notag \\
&\leq &\dfrac{\left( b-a\right) \left( d-c\right) }{16}\left[
\dint_{0}^{1}\left\vert 1-2s\right\vert \left( s\left\vert \dfrac{\partial
^{2}f}{\partial t\partial s}(a,c)\right\vert ^{q}\right. \right.  \notag \\
&&\left. \left. +(1-s)\left\vert \dfrac{\partial ^{2}f}{\partial t\partial s}%
(a,d)\right\vert ^{q}+s\left\vert \dfrac{\partial ^{2}f}{\partial t\partial s%
}(b,c)\right\vert ^{q}+(1-s)\left\vert \dfrac{\partial ^{2}f}{\partial
t\partial s}(b,d)\right\vert ^{q}\right) ds\right] ^{\frac{1}{q}}.  \notag
\end{eqnarray}%
A similar way for other integral, since $f:\Delta \rightarrow \mathbb{R}$ is
co-ordinated convex on $\Delta ,$ we get%
\begin{eqnarray}
&&\dint_{0}^{1}\left\vert 1-2s\right\vert \left( s\left\vert \dfrac{\partial
^{2}f}{\partial t\partial s}(a,c)\right\vert ^{q}+(1-s)\left\vert \dfrac{%
\partial ^{2}f}{\partial t\partial s}(a,d)\right\vert ^{q}\right.  \label{E3}
\\
&&\left. +s\left\vert \dfrac{\partial ^{2}f}{\partial t\partial s}%
(b,c)\right\vert ^{q}+(1-s)\left\vert \dfrac{\partial ^{2}f}{\partial
t\partial s}(b,d)\right\vert ^{q}\right) ds  \notag \\
&=&\dint_{0}^{\frac{1}{2}}\left( 1-2s\right) \left( s\left\vert \dfrac{%
\partial ^{2}f}{\partial t\partial s}(a,c)\right\vert ^{q}+(1-s)\left\vert 
\dfrac{\partial ^{2}f}{\partial t\partial s}(a,d)\right\vert ^{q}\right. 
\notag \\
&&\left. +s\left\vert \dfrac{\partial ^{2}f}{\partial t\partial s}%
(b,c)\right\vert ^{q}+(1-s)\left\vert \dfrac{\partial ^{2}f}{\partial
t\partial s}(b,d)\right\vert ^{q}\right) ds  \notag \\
&&+\dint_{\frac{1}{2}}^{1}\left( 2s-1\right) \left( s\left\vert \dfrac{%
\partial ^{2}f}{\partial t\partial s}(a,c)\right\vert ^{q}+(1-s)\left\vert 
\dfrac{\partial ^{2}f}{\partial t\partial s}(a,d)\right\vert ^{q}\right. 
\notag \\
&&\left. +s\left\vert \dfrac{\partial ^{2}f}{\partial t\partial s}%
(b,c)\right\vert ^{q}+(1-s)\left\vert \dfrac{\partial ^{2}f}{\partial
t\partial s}(b,d)\right\vert ^{q}\right) ds  \notag \\
&=&\dfrac{\left\vert \dfrac{\partial ^{2}f}{\partial t\partial s}%
(a,c)\right\vert ^{q}}{24}+\dfrac{5\left\vert \dfrac{\partial ^{2}f}{%
\partial t\partial s}(a,d)\right\vert ^{q}}{24}+\dfrac{\left\vert \dfrac{%
\partial ^{2}f}{\partial t\partial s}(b,c)\right\vert ^{q}}{24}+\dfrac{%
5\left\vert \dfrac{\partial ^{2}f}{\partial t\partial s}(b,d)\right\vert ^{q}%
}{24}  \notag \\
&&+\dfrac{5\left\vert \dfrac{\partial ^{2}f}{\partial t\partial s}%
(a,c)\right\vert ^{q}}{24}+\dfrac{\left\vert \dfrac{\partial ^{2}f}{\partial
t\partial s}(a,d)\right\vert ^{q}}{24}+\dfrac{5\left\vert \dfrac{\partial
^{2}f}{\partial t\partial s}(b,c)\right\vert ^{q}}{24}+\dfrac{\left\vert 
\dfrac{\partial ^{2}f}{\partial t\partial s}(b,d)\right\vert ^{q}}{24} 
\notag \\
&=&\dfrac{\left\vert \dfrac{\partial ^{2}f}{\partial t\partial s}%
(a,c)\right\vert ^{q}+\left\vert \dfrac{\partial ^{2}f}{\partial t\partial s}%
(a,d)\right\vert ^{q}+\left\vert \dfrac{\partial ^{2}f}{\partial t\partial s}%
(b,c)\right\vert ^{q}+\left\vert \dfrac{\partial ^{2}f}{\partial t\partial s}%
(b,d)\right\vert ^{q}}{4}.  \notag
\end{eqnarray}%
By the (\ref{E2}) and (\ref{E3}), we get the inequality (\ref{E1}).
\end{proof}

\begin{remark}
Since $\frac{1}{4}<\frac{1}{\left( p+1\right) ^{\frac{2}{p}}}<1,$ if $p>1,$
the estimation given in Theorem \ref{t.2.3} is better than the one given in
Theorem \ref{t.2.2}.
\end{remark}

\end{document}